\documentclass[reqno]{amsart}

\usepackage{amsmath} 
\usepackage{amsfonts}
\usepackage{amssymb}
\usepackage{amstext}
\usepackage{amsbsy}
\usepackage{amsopn}
\usepackage{amsthm}
\usepackage{amsxtra}
\usepackage{graphicx}
\usepackage{caption}
\usepackage{subcaption}
\usepackage{color}
\usepackage{hyperref}
\usepackage{enumerate}
\usepackage{amscd}

\newtheorem{theorem}{Theorem}[section]
\newtheorem{lemma}[theorem]{Lemma}
\newtheorem{corollary}[theorem]{Corollary}

\newtheorem*{conjecture*}{Conjecture}
\newtheorem*{claim*}{Claim}
\newtheorem*{theorem*}{Theorem}

\theoremstyle{remark}

\theoremstyle{definition}
\newtheorem{definition}[theorem]{Definition}

\newcommand{\A}{\mathcal{A}}

\newcommand{\Z}{\mathbb{Z}}

\newcommand{\N}{\mathbb{N}}

\newcommand{\Aut}{{\rm Aut}}

\newcommand{\rst}[1]{\ensuremath{{\mathbin\upharpoonright}%
\raise-.5ex\hbox{$#1$}}}

\begin{document}

\title[Automorphisms of a shift of stretched exponential growth]{The automorphism group of a minimal shift of stretched exponential growth}
\author{Van Cyr}
\address{Bucknell University, Lewisburg, PA 17837 USA}
\email{van.cyr@bucknell.edu}
\author{Bryna Kra}
\address{Northwestern University, Evanston, IL 60208 USA}
\email{kra@math.northwestern.edu}

\subjclass[2010]{}
\keywords{}

\thanks{The  second author was partially supported by NSF grant 1500670.}

\subjclass[2010]{37B10, 43A07, 68R15}

\begin{abstract}
The group of automorphisms of a symbolic dynamical system is countable, but 
often very large.  For example, for a mixing subshift of finite type, the automorphism group contains isomorphic copies of the free group on two generators and the direct sum of countably many copies of $\Z$.  In contrast, 
the group of automorphisms of a symbolic system of zero entropy seems to be highly 
constrained.  Our main result is that the automorphism group of any minimal subshift of stretched exponential growth with exponent $<1/2$, is amenable (as a countable discrete group).  For shifts of polynomial growth, we further show that any finitely generated, torsion free subgroup of $\Aut(X)$ is virtually nilpotent. 

\end{abstract}

\maketitle

\section{Complexity and the automorphism group}

Let $(X, \sigma)$ be a subshift over the finite alphabet $\A$,
meaning that $X\subset \A^\Z$ is closed and invariant under the left
shift $\sigma\colon \A^\Z\to\A^\Z$. 
The group of automorphisms $\Aut(X)$ of $(X,\sigma)$ is the group of homeomorphisms of $\phi\colon X\to X$ such that $\phi\circ\sigma = \sigma\circ\phi$.  A classic result of Curtis, Hedlund, and Lyndon is that $\Aut(X)$ is always countable, but a number of results have shown that $\Aut(X)$ can be quite large.  For example, for any mixing subshift of finite type, $\Aut(X)$ always contains (among others) a copy of: every finite group, the direct sum of countably many copies of $\Z$, and the free group on two generators~\cite{Hedlund2,BLR}; every countable, locally finite, residually finite group~\cite{kimroush}; 
the fundamental group of any $2$-manifold~\cite{kimroush}.  

This extremely rich subgroup structure makes the problem of deciding when two shifts have isomorphic automorphism groups challenging.  
Moreover, Kim and Roush~\cite{kimroush} showed 
that the automorphism group of any full shift is contained in the automorphism group of any other full shift (and more generally is contained in any mixing subshift of finite type), thereby dooming 
any strategy for distinguishing two such groups that relies on finding a subgroup of one that does not 
embed into the other.  Even the question of whether the automorphism groups of the full $2$-shift and the full $3$-shift are isomorphic remains a difficult open problem~\cite{BLR} (although, as they remark, the automorphism groups of the full $2$-shift and the full $4$-shift are not isomorphic).  In all of these examples, the complicated nature of $\Aut(X)$ is a manifestation of the relatively light constraints required on $x\in\A^{\Z}$ to be a member of the shift space $X$.  Another consequence of this fact is that these shifts always have positive (although arbitrarily small) entropy.  As a corollary, if $G$ is a group that embeds into the automorphism group of the full $2$-shift, then for any $h>0$ there is a subshift of topological entropy less than $h$ into whose automorphism group $G$ also embeds. 

It is therefore natural to ask whether the automorphism group of a zero entropy subshift is more highly constrained than its positive entropy relatives.  Over the past several years, the authors~\cite{CK3, CK4} and others (e.g.,~\cite{CQY,DDMP,S,SaTo}) have shown that the zero entropy case is indeed significantly more constrained, and we continue this theme in the present work.  Specifically, we study how the the growth rate of the factor complexity $P_X(n)$, the number of nonempty cylinder sets of length $n$, constrains the algebraic properties of group of automorphisms.  
For a shift whose factor complexity grows at most linearly, we showed~\cite{CK4} that every finitely generated subgroup of $\Aut(X)$ is
virtually $\Z^d$ for some $d$ that depends on the growth rate.  We further showed~\cite{CK3} that for a transitive shift of subquadratic growth, the quotient of $\Aut(X)$ by the subgroup generated by $\sigma$, is periodic.  Further examples of minimal shifts with polynomial complexity and highly constrained automorphism groups were constructed
by Donoso, Durand, Maass and Petite~\cite{DDMP}, and an example of a minimal shift with subquadratic growth whose automorphism group is not finitely generated was given by Salo~\cite{S}.

Our main theorem provides a strong constraint on $\Aut(X)$ for any minimal subshift of stretched exponential growth with exponent $<1/2$.  We show: 
\begin{theorem}
\label{th:stretched}
If $(X, \sigma)$ is a minimal shift such that there exists $\beta < 1/2$ satisfying 
\begin{equation}
\label{eq:stretched}
\limsup_{n\to\infty}\frac{\log(P_X(n))}{n^\beta} = 0, 
\end{equation}
then $\Aut(X)$ is amenable.  Moreover, every finitely generated, torsion free subgroup of $\Aut(X)$ has subexponential growth. 
\end{theorem}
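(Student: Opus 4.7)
The plan is to prove that every finitely generated subgroup $G\le\Aut(X)$ has subexponential growth; this simultaneously gives the second assertion and implies the first, since subexponential growth entails amenability, and amenability is a local property of countable discrete groups.

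By the Curtis--Hedlund--Lyndon theorem, every $\phi\in\Aut(X)$ is a sliding block code with a smallest admissible radius $r(\phi)\ge 0$, and the range is subadditive under composition. So if $G$ is generated by a finite symmetric set $S$ and $R=\max_{s\in S}r(s)$, every element of the ball $B_G(n)$ has range at most $Rn$, and its growth is controlled by
$$N(r):=\#\{\phi\in\Aut(X):r(\phi)\le r\}.$$

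The heart of the proof is a counting estimate of the rough shape $\log N(r)\le F(P_X,r)$, where $F$ is of polynomial order in $P_X$ evaluated at a polynomial in $r$. The trivial bound $N(r)\le |\A|^{P_X(2r+1)}$ obtained by counting arbitrary local rules is far too weak; minimality must be used to show that an automorphism is highly overdetermined by its local behavior. One natural route is to exploit uniform recurrence: since every sufficiently long word $w$ in the language of $X$ recurs syndetically, an automorphism $\phi$ of range $\le r$ is pinned down by its action on a bounded number of occurrences of $w$, with each image constrained to the size-$P_X(|w|+2r)$ set of $X$-words of that length; the combinatorics of compatibility between these constraints should yield the desired polynomial exponent inside $P_X$.

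Once such an estimate is in hand, substituting $r=Rn$ and applying \eqref{eq:stretched} makes $\log |B_G(n)|=o(n)$ precisely when $\beta<1/2$; the threshold reflects the power of $r$ that ultimately appears inside $P_X$, a factor of roughly $r^2$ being what forces $\beta<1/2$ rather than the naive $\beta<1$. Subexponential growth of every finitely generated subgroup then delivers the amenability of $\Aut(X)$, and gives the explicit subexponential bound for torsion-free finitely generated subgroups. The main obstacle is the counting lemma itself: converting minimality into a sufficiently sharp upper bound on $N(r)$ is a delicate combinatorial/dynamical task, and the achievable exponent is precisely what fixes the threshold $\beta<1/2$ in the hypothesis.
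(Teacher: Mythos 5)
There is a genuine gap at the heart of your plan: the counting estimate $\log N(r)\le F(P_X,r)$ for \emph{all} automorphisms of range at most $r$ is not available, and the paper does not prove it. What minimality actually yields (Lemma~\ref{lemma:group}) is that the automorphisms of range at most $\lfloor|w|/2\rfloor$ mapping $[w]_0^+$ into itself generate a \emph{finite} group $G_w$; but the order of $G_w$ is controlled by the recurrence constant $K_w$ (the maximal gap between occurrences of $w$) and the number of return words, neither of which is bounded by $P_X$ evaluated at a polynomial in $r$. So an automorphism of range $r$ is pinned down by its action on a suitable word only \emph{up to an element of a finite group of uncontrolled size}, and your bound on $N(r)$ does not follow. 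This is precisely why the paper's subexponential growth conclusion (Theorem~\ref{thm:subexponential}) is restricted to \emph{torsion free} finitely generated subgroups: torsion-freeness forces each coset of the finite group $G_{\tilde w}$ to meet the subgroup in at most one point, so the coset count alone bounds the ball. For subgroups with torsion no such bound is claimed, and your route to amenability --- ``every finitely generated subgroup has subexponential growth, hence is amenable, hence $\Aut(X)$ is amenable by locality'' --- therefore rests on an unproven (and possibly false) premise. The paper instead proves amenability directly, by exhibiting a F{\o}lner sequence whose members are unions of cosets of the finite groups $G_{\tilde w}$, using a pigeonhole lemma (Lemma~\ref{lemma:finite}) to locate a range $M$ at which the number of cosets needed to cover $\Aut_{M+k}(X)$ barely exceeds the number needed for $\Aut_M(X)$.

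A second, related miss is the mechanism behind the threshold $\beta<1/2$. It does not come from uniform recurrence or from ``a factor of roughly $r^2$ inside $P_X$.'' The key input is Lemma~\ref{lemma:growth}: slow complexity growth forces the existence of words of length $n$ that extend \emph{uniquely} about $n^{1-\beta}$ times to both sides. Inverting, a range-$R$ automorphism is determined (modulo $G_{\tilde w}$) by its image on a word of length roughly $R^{1/(1-\beta)}$, so the number of cosets is at most $P_X\bigl(O(R^{1/(1-\beta)})\bigr)$, whose logarithm is $o\bigl(R^{\beta/(1-\beta)}\bigr)$; this is $o(R)$ exactly when $\beta/(1-\beta)<1$, i.e., $\beta<1/2$. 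Unique extendability is a complexity phenomenon, not a consequence of minimality, and without it the passage from ``action on one occurrence of $w$'' to ``action everywhere'' does not determine the word $\varphi(\tilde w)$ from a short enough seed. To repair your argument you would need both this lemma and the finite-group/coset device, at which point you have essentially reconstructed the paper's proof --- but only for torsion free subgroups, with amenability still requiring the separate F{\o}lner construction of Theorem~\ref{thm:amenable}.
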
 
\noindent For minimal shifts of polynomial growth, we show more: 
\begin{theorem}
\label{th:polynomial}
If $(X, \sigma)$ is a minimal shift such that there exists $d\in\N$ satisfying
\begin{equation}\label{eq:polynomial-growth}  
\limsup_{n\to\infty}\frac{P_X(n)}{n^d}=0, 
\end{equation} 
then $\Aut(X)$ is amenable.  
Furthermore, every finitely generated, torsion free subgroup 
of $\Aut(X)$ is virtually nilpotent with polynomial growth rate at most $d-1$.  In particular, the step of the nilpotent subgroup is at most $\left\lfloor\frac{-1+\sqrt{8d-7}}{2}\right\rfloor$.
\end{theorem}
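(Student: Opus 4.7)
The plan is to leverage Theorem~\ref{th:stretched} for the amenability portion, and then work harder in the polynomial regime to promote subexponential growth to polynomial growth with a controlled degree.

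Since the hypothesis~\eqref{eq:polynomial-growth} is strictly stronger than~\eqref{eq:stretched} for any $\beta<1/2$ (indeed $\log P_X(n)/n^\beta\le d\log n/n^\beta\to 0$), Theorem~\ref{th:stretched} immediately yields that $\Aut(X)$ is amenable and that every finitely generated torsion-free subgroup $G\le\Aut(X)$ has subexponential word growth. So the remaining task is to upgrade this to polynomial growth of degree at most $d-1$, and then to extract the structural consequences.

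The central step is to bound the cardinality of the word-ball $B_G(n)$ by a polynomial of degree $d-1$ in $n$. Fix a finite symmetric generating set $S\subset G$ and let $R=\max_{s\in S}r(s)$, where $r(s)$ is the Curtis--Hedlund--Lyndon range of $s$. Every element of $B_G(n)$ is then an automorphism of range at most $Rn$, so it is determined by a sliding block code on words of length $2Rn+1$. The strategy is to count such codes modulo the constraint that they preserve $X$: by a Morse--Hedlund/Boshernitzan-type argument for minimal shifts (refined in the authors' earlier work~\cite{CK4,CK3}), an element of $\Aut(X)$ of range $m$ is pinned down by its image on a collection of cylinder sets whose size is controlled by $P_X(m)$, and collisions between distinct group elements impose further rigidity that is incompatible with minimality unless the count is polynomial of degree at most $d-1$. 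Combining this counting with the hypothesis $P_X(m)=o(m^d)$ yields $|B_G(n)|\le C n^{d-1}$ for some constant $C=C(G,X)$ and all sufficiently large $n$. This counting step is the main technical obstacle and is where the exponent $d-1$ (rather than $d$) enters, reflecting one ``free direction'' consumed by the shift itself.

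Once polynomial growth of degree at most $d-1$ is established, Gromov's theorem applies and gives that $G$ is virtually nilpotent. To bound the step, let $N\trianglelefteq G$ be a nilpotent subgroup of finite index with lower central series $N=N_1\supseteq N_2\supseteq\cdots\supseteq N_{s+1}=\{e\}$, and set $r_k=\mathrm{rank}(N_k/N_{k+1})$. The Bass--Guivarc'h formula states that the polynomial growth degree of $N$ equals $\sum_{k=1}^{s}k\,r_k$, and this equals the growth degree of $G$ because $[G:N]<\infty$. Since $N$ is nilpotent of step exactly $s$, each $r_k\ge 1$ for $1\le k\le s$, so
\begin{equation*}
d-1\;\ge\;\sum_{k=1}^{s}k\,r_k\;\ge\;\sum_{k=1}^{s}k\;=\;\frac{s(s+1)}{2}.
\end{equation*}
Solving $s^2+s\le 2(d-1)$ for the largest admissible integer $s$ gives
\begin{equation*}
s\;\le\;\left\lfloor\frac{-1+\sqrt{8d-7}}{2}\right\rfloor,
\end{equation*}
which is the claimed bound on the nilpotency step. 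The hardest part, as noted, is the polynomial counting of $B_G(n)$; the subsequent steps (Gromov, Bass--Guivarc'h, arithmetic) are then essentially formal.
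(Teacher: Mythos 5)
Your reduction of amenability to Theorem~\ref{th:stretched} is fine and is exactly what the paper does, and your Bass--Guivarc'h arithmetic at the end is correct. The problem is that the step you yourself flag as ``the main technical obstacle'' --- bounding $|B_G(n)|$ polynomially --- is not a proof but a placeholder: ``collisions between distinct group elements impose further rigidity that is incompatible with minimality unless the count is polynomial of degree at most $d-1$'' asserts the conclusion rather than deriving it. The actual mechanism is concrete and is missing from your write-up. One first shows (Lemma~\ref{lemma:growth}, polynomial case) that the hypothesis $P_X(n)=o(n^d)$ forces, for some $C>0$ and \emph{infinitely many} $n$, the existence of a word $w_n\in\mathcal{L}_n(X)$ extending uniquely at least $Cn$ times to each side; this is a doubling-time argument on $P_X$, not a Morse--Hedlund count of block codes. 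One then shows (Lemma~\ref{lemma:group}, using minimality via the bounded-gap constant $K_{w}$) that the subgroup $G_{\tilde{w}}$ generated by range-$\lfloor|\tilde w|/2\rfloor$ automorphisms mapping $[\tilde w]_0^+$ into itself is \emph{finite}. For $\varphi_1,\varphi_2\in\Aut_{Rm_n}(X)\cap G$ with $\varphi_1(\tilde w_n)=\varphi_2(\tilde w_n)$, the unique extendability forces $\varphi_1^{-1}\varphi_2\in G_{\tilde w_n}$, and torsion-freeness of $G$ then forces $\varphi_1=\varphi_2$; hence $\varphi\mapsto\varphi(\tilde w_n)$ is injective on $\Aut_{Rm_n}(X)\cap G$ and $|\Aut_{Rm_n}(X)\cap G|\le P_X(|w_n|+R)=o\bigl((m_n)^d\bigr)$. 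Without the finiteness of $G_{\tilde w}$ and the torsion-free hypothesis, your claimed injective parametrization of ball elements by their action on cylinder sets does not go through.

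Two further points where your plan diverges from what the argument can actually deliver. First, the bound is obtained only along the subsequence $\{m_n\}$ (because Lemma~\ref{lemma:growth} produces uniquely extendable words only for infinitely many lengths), so you get $\liminf_n\gamma(n)/n^d=0$ rather than $|B_G(n)|\le Cn^{d-1}$ for all large $n$; plain Gromov~\cite{gromov} is therefore not enough, and you need the van den Dries--Wilkie strengthening~\cite{VW}, which requires the polynomial bound only for infinitely many radii. Second, the exponent $d-1$ does not come from ``one free direction consumed by the shift'' in the counting: the count gives only $o(n^d)$ along the subsequence, and $d-1$ emerges afterwards because the Bass--Guivarc'h degree $\sum_k k\,r_k$ of the virtually nilpotent group is an integer, so a growth degree strictly below $d$ must be at most $d-1$. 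Your concluding inequality $\frac{s(s+1)}{2}\le d-1$ and the resulting bound $s\le\bigl\lfloor\frac{-1+\sqrt{8d-7}}{2}\bigr\rfloor$ are then correct as stated.
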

In particular, this shows that $\Z^d$ does not embed in the automorphism group of a minimal shift whose growth rate 
is $o(n^d)$.  

In both the polynomial and stretched exponential cases, the amenability of $\Aut(X)$ stands in stark contrast to the possible behavior of the automorphism group of a shift of positive entropy. 
Furthermore, for shifts of positive entropy, we are far from being able to characterize the groups that can arise as automorphism groups, but we are approaching an answer for shifts with zero entropy.

This leaves open several natural questions.  We are not able to provide examples of minimal shifts showing that 
Theorems~\ref{th:stretched}  and~\ref{th:polynomial} can not be improved, in the sense 
that the automorphism groups may be smaller.  In particular, we can not rule out the possibility 
that the automorphism groups are virtually $\Z^d$, as raised in~\cite{DDMP}.  
In fact, it suffices to show that the Heisenberg group does not embed in the automorphism group, as 
any torsion free, nonabelian nilpotent group contains an isomorphic copy of the Heisenberg group as a subgroup.  
However, for subcubic growth, the control on the step of the nilpotent group in Theorem~\ref{th:polynomial} implies: 
\begin{corollary}
If $(X,\sigma)$ is a minimal shift  such that 
$$
\limsup_{n\to\infty}\frac{P_X(n)}{n^3} = 0, 
$$
then every finitely generated, torsion free subgroup of $\Aut(X)$ is virtually abelian.  
\end{corollary}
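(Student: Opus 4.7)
The plan is to deduce the corollary as an immediate consequence of Theorem~\ref{th:polynomial} applied to $d = 3$, combined with the observation that a nilpotent group of class at most $1$ is abelian. Since the hypothesis $\limsup_{n\to\infty} P_X(n)/n^3 = 0$ is exactly condition~\eqref{eq:polynomial-growth} for $d=3$, Theorem~\ref{th:polynomial} applies directly and tells us that any finitely generated, torsion free subgroup $H \leq \Aut(X)$ is virtually nilpotent, with the nilpotency step of the nilpotent finite-index subgroup bounded by $\lfloor (-1 + \sqrt{8d-7})/2 \rfloor$.

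The only computation needed is to evaluate this step bound at $d = 3$. Since $\sqrt{8 \cdot 3 - 7} = \sqrt{17}$ lies strictly between $4$ and $5$, we have
\begin{equation*}
\left\lfloor \frac{-1 + \sqrt{17}}{2} \right\rfloor = \lfloor 1.56\ldots \rfloor = 1.
\end{equation*}
Thus Theorem~\ref{th:polynomial} produces a finite-index subgroup of $H$ that is nilpotent of class at most $1$. By the definition of nilpotency class, a group of class $\leq 1$ has trivial commutator subgroup, hence is abelian. Therefore $H$ contains an abelian subgroup of finite index, i.e.\ $H$ is virtually abelian.

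There is no genuine obstacle in this argument beyond citing Theorem~\ref{th:polynomial} and performing the numerical check; the entire content is packaged into the step-bound formula proved in the main theorem. If one wished to present the corollary in a self-contained fashion, the only additional remark to make is the elementary group-theoretic fact that lower central series of length one forces commutativity, which I would state in a single sentence rather than prove.
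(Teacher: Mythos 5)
Your proof is correct and matches the paper's intended argument exactly: the corollary is stated as an immediate consequence of the step bound in Theorem~\ref{th:polynomial}, and plugging in $d=3$ gives $\lfloor(-1+\sqrt{17})/2\rfloor=1$, so the nilpotent finite-index subgroup is abelian. No further comment is needed.
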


After writing this paper, Donoso, Durand, Maass, and Petite~\cite{DDMP2} shared with us a proof that improves this corollary, showing that 
the same result holds with  a growth rate that is $o(n^5)$.  

Furthermore, we believe that the amenability of $\Aut(X)$ from Theorem~\ref{th:stretched} should hold for $1/2 \leq \beta \leq 1$ thus for all zero entropy shifts, but our methods do not cover this case.  Finally, we do not know if these results generalize to transitive or general shifts.

\section{Background and Notation} 
\subsection{Subshifts} Suppose $\A$ is a finite set with the discrete topology and let $\A^{\Z}$ denote the set of bi-infinite sequences 
$$ 
\A^{\Z}:=\{(\dots,x_{-2}, x_{-1}, x_0, x_1, x_2,\dots)\colon x_i\in\A\text{ for all }i\} 
$$ 
endowed with the product topology.  The metric 
$$ 
d\bigl((\dots,x_{-1},x_0,x_1,\dots),(\dots,y_{-1},y_0,y_1,\dots)\bigr):=2^{-\min\{|i|\colon x_i\neq y_i\}} 
$$ 
generates this topology and makes $\A^{\Z}$ into a compact metric space.  When convenient, we denote an element of $\A^{\Z}$ by $(x_i)_{i=-\infty}^{\infty}$. 

The {\em left shift} is the map $\sigma\colon\A^{\Z}\to\A^{\Z}$ defined by 
$$ 
\sigma\left((x_i)_{i=-\infty}^{\infty}\right):=(x_{i+1})_{i=-\infty}^{\infty} 
$$ 
and it is easy to verify that $\sigma$ is a homeomorphism on $\A^{\Z}$.  A {\em subshift} of $\A^{\Z}$ is a closed, $\sigma$-invariant subset $X\subset\A^{\Z}$ (endowed with the subspace topology).  

If $X$ is a subshift of $\A^{\Z}$ and $f\in\A^{\{-n+1,\dots,-1,0,1,\dots,n-1\}}$ is given, then the {\em central cylinder set} $[f]_0$ of size $n$ in $X$ determined by $f$ is defined to be the set 
$$ 
[f]_0:=\{x\in X\colon x_i=f(i)\text{ for all } -n<i<n\}. 
$$ 
A standard fact is that the collection of central cylinder sets, taken over all $f$ and $n$, forms a basis for the topology of $X$.  If $g\in\A^{\{0,1,\dots,n-1\}}$, then the {\em one-sided cylinder set} $[g]_0^+$ of size $n$ in $X$ determined by $g$ is defined to be the set 
$$ 
[g]_0^+:=\{x\in X\colon x_i=g(i)\text{ for all }0\leq i<n\}. 
$$ 

For fixed $n\in\N$, the {\em words of length $n$ in $\A^{\Z}$} is the set $\A^{\{0,1,\dots,n-1\}}$ and is denoted by 
$\mathcal{L}_n(\A^{\Z})$.  An element $w\in\mathcal{L}_n(\A^{\Z})$ is written $w=(w_0,w_1,\dots,w_{n-1})$ or, when convenient, simply as a concatenation of letters: $w=w_0w_1\dots w_{n-1}$.  
A word $w$ naturally defines a one-sided central cylinder set $[w]_0$, and in a slight abuse of notation we write 
$$
[w]_0^+=\{x\in X\colon x_j=w_j \text{ for all } 0\leq j < n\}. 
$$

If $X$ is a fixed subshift, 
then the set of {\em words of length $n$ in $X$} is the set 
$$ 
\mathcal{L}_n(X):=\{w\in\mathcal{L}_n(\A^{\Z})\colon[w]_0^+\cap X\neq\emptyset\}. 
$$ 
The set of all words $\mathcal{L}(X)$ in $X$ is given by
$$ 
\mathcal{L}(X):=\bigcup_{n=1}^{\infty}\mathcal{L}_n(X) 
$$ 
and is called the {\em language of $X$}.  The {\em complexity function} $P_X\colon\N\to\N$ is defined to be 
the map 
$$ 
P_X(n):=|\mathcal{L}_n(X)|, 
$$ 
meaning it assigns to each $n\in\N$ the number of words of length $n$ in the language of $X$. 
It follows from the Morse-Hedlund Theorem~\cite{MH} that 
if $P_X$ is not increasing, then the associated subshift is periodic.  

If $w\in\mathcal{L}_n(X)$, then we say that $w$ {\em extends uniquely to its right} if there is a unique $u\in\mathcal{L}_{n+1}(X)$ such that $u_i=w_i$ for all $0\leq i<n$.  Similarly, we say that $w$ {\em extends uniquely to its left} if there is a unique $u\in\mathcal{L}_{n+1}(X)$ such that $u_i=w_i$ for all $0<i\leq n$.  
More generally, for $N\in\N$, a word $w\in\mathcal{L}_n(X)$ {\em extends uniquely $N$ times to its right} if there is a unique $u\in\mathcal{L}_{n+N}(X)$ such that $u_i=w_i$ for all $0\leq i<n$, and similarly for extensions to the left.

\subsection{Automorphisms} If $(X,\sigma)$ is a subshift, then the group of {\em automorphisms of $(X,\sigma)$} 
is the set of all homeomorphisms of $X$ that commute with $\sigma$ and this group is denoted $\Aut(X)$.  
With respect to the compact open topology, $\Aut(X)$ is discrete.  
A map $\varphi\colon X\to X$ is a {\em block code of range $R$} if for all $x\in X$ the symbol that $\varphi(x)$ assigns to $0$ is determined by the word $(x_{-R},\dots,x_0,\dots,x_R)$.  The basic structure of an automorphism of $(X,\sigma)$ is 
given by: 
\begin{theorem}[Curtis-Hedlund-Lyndon Theorem~\cite{Hedlund2}] 
If $(X,\sigma)$ is a subshift and $\varphi\in\Aut(X)$, then there exists $R\in\N\cup\{0\}$ such that $\varphi$ is a block code of range $R$. 
\end{theorem}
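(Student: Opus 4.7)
The plan is to exploit compactness of $X$ together with discreteness of the alphabet $\A$ to convert continuity of $\varphi$ into a uniform local condition. The starting observation is that the coordinate projection $\pi_0 \colon X \to \A$ sending $x$ to $x_0$ is continuous, because each singleton $\{a\} \subset \A$ has preimage equal to the clopen cylinder $[a]_0^+ \cap X$. Composing with $\varphi$ gives a continuous map $\psi := \pi_0 \circ \varphi$ from the compact metric space $X$ to a finite discrete space.

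Next I would use that a continuous map from a compact metric space to a discrete space is uniformly continuous, producing $\delta > 0$ such that $d(x,y) < \delta$ forces $\varphi(x)_0 = \varphi(y)_0$. An equivalent and perhaps more combinatorially transparent phrasing of the same step is that the finitely many clopen sets $\{\psi^{-1}(a) : a \in \A\}$ partition $X$, and each piece can be written as a finite union of basic central cylinders; taking the largest radius appearing in this finite cover yields a single uniform window size.

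I would then translate $\delta$ into a concrete integer $R$. Choosing $R \in \N \cup \{0\}$ with $2^{-(R+1)} < \delta$, the definition of the metric on $\A^{\Z}$ gives that whenever $x_i = y_i$ for all $-R \leq i \leq R$, one has $d(x,y) \leq 2^{-(R+1)} < \delta$, hence $\varphi(x)_0 = \varphi(y)_0$. Thus $\varphi(x)_0$ depends only on the finite window $(x_{-R},\dots,x_R)$. To lift this local statement from coordinate $0$ to every coordinate I would invoke shift-equivariance: for any $n \in \Z$,
\[
\varphi(x)_n = \bigl(\sigma^n \varphi(x)\bigr)_0 = \varphi(\sigma^n x)_0,
\]
and the right-hand side depends only on $(x_{n-R},\dots,x_{n+R})$, yielding the block code property with range $R$.

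There is no substantive obstacle here; the argument is essentially the standard exchange between pointwise and uniform continuity on a compact space. The only places demanding a modicum of care are the metric bookkeeping, so that the chosen $R$ really corresponds to distance strictly less than $\delta$, and the observation that commutation with $\sigma$ is genuinely used at the very last step — without it one could conclude only that $\varphi$ is continuous, not that it is a sliding block code.
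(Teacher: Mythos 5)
Your argument is correct and is the standard compactness proof of the Curtis--Hedlund--Lyndon theorem: continuity of $x\mapsto\varphi(x)_0$ into the finite discrete alphabet plus compactness of $X$ yields a uniform window radius $R$, and shift-equivariance transports the coordinate-$0$ statement to every coordinate. The paper itself offers no proof of this statement (it is quoted from Hedlund's paper~\cite{Hedlund2}), so there is nothing to compare against; your write-up, including the metric bookkeeping $d(x,y)\leq 2^{-(R+1)}<\delta$ when $x$ and $y$ agree on $\{-R,\dots,R\}$, is complete and would serve as a valid self-contained proof.
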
 
If $\varphi$ is a block code of range $R$, then it is also 
a block code of range $S$ for any $S\geq R$.  Therefore we can speak of {\em a} range for $\varphi$ or of {\em the} minimal range for $\varphi$.  We define 
$$ 
\Aut_R(X):=\left\{\varphi\in\Aut(X)\colon\text{$R$ is a range for $\varphi$ and $\varphi^{-1}$}\right\}. 
$$ 
The Curtis-Hedlund-Lyndon Theorem implies that
$$ 
\Aut(X)=\bigcup_{R=0}^{\infty}\Aut_R(X). 
$$ 

We observe that it follows immediately from the definitions that if $\varphi\in\Aut_R(X)$ for some $R\in\N$, $x\in X$, and $m<n$ are integers, then the restriction of $\varphi(x)$ to $\{m,m+1,\dots,n\}$ is uniquely determined by the restriction of $x$ to $\{m-R,m-R+1,\dots,n+R-1,n+R\}$. 
This motivates the definition of the action of an automorphism on a word: 
\begin{definition} 
Suppose $\varphi\in\Aut_R(X)$ and $w\in\mathcal{L}(X)$ has length at least $2R+1$.  We 
define the word $\varphi(w)$ to be the unique $u\in\mathcal{L}_{|w|-2R}(X)$ such that for all $x\in[w]_0^+$, 
we have $\varphi(x)\in\sigma^{-R}[u]_0^+$.  
\end{definition} 

Note that $\varphi(w)$ depends on the choice of a range for $\varphi$, and so this definition  only makes sense for the pair $(\varphi,R)$ and not simply for $\varphi$. 

We record another useful consequence of these observations: 
\begin{lemma}\label{lemma:growth2} 
Let $\varphi_1,\dots,\varphi_m\in\Aut_R(X)$, let $i_1,\dots,i_n\in\{1,\dots,m\}$, and let $e_1,\dots,e_m\in\{-1,1\}$.  Then 
$$ 
\varphi_{i_1}^{e_1}\circ\varphi_{i_2}^{e_2}\circ\cdots\circ\varphi_{i_n}^{e_n}\in\Aut_{nR}(X). 
$$ 
\end{lemma}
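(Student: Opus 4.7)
The plan is to proceed by induction on $n$, with the key reusable fact being that the composition of a block code of range $A$ with a block code of range $B$ is a block code of range $A+B$. The base case $n=1$ is immediate from the definition of $\Aut_R(X)$, since $R$ is required to be a range for both $\varphi_{i_1}$ and its inverse, and hence for $\varphi_{i_1}^{e_1}$ regardless of the sign $e_1 \in \{-1,1\}$.

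For the composition step, I would first establish the following: if $\psi$ is a block code of range $A$ and $\phi$ is a block code of range $B$, then $\psi \circ \phi$ is a block code of range $A+B$. The reason is that $(\psi \circ \phi)(x)_0$ is determined by the values $(\phi(x))_{-A}, \ldots, (\phi(x))_A$, and each $(\phi(x))_j$ with $|j|\leq A$ is in turn determined by $x_{j-B}, \ldots, x_{j+B}$; taking the union of these windows yields $x_{-A-B}, \ldots, x_{A+B}$, as claimed.

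For the inductive step, assume the composition of any $n-1$ maps of the given form lies in $\Aut_{(n-1)R}(X)$, and set
$$
\Phi := \varphi_{i_1}^{e_1} \circ \bigl(\varphi_{i_2}^{e_2} \circ \cdots \circ \varphi_{i_n}^{e_n}\bigr).
$$
The inner composition is, by hypothesis, a block code of range $(n-1)R$, and $\varphi_{i_1}^{e_1}$ is a block code of range $R$, so the displayed fact gives that $\Phi$ is a block code of range $nR$. To conclude $\Phi \in \Aut_{nR}(X)$, I must also check that $\Phi^{-1}$ is a block code of range $nR$. But $\Phi^{-1} = \varphi_{i_n}^{-e_n} \circ \cdots \circ \varphi_{i_1}^{-e_1}$ is a composition of $n$ maps of the same form (each $\varphi_{i_k}^{-e_k}$ is a block code of range $R$ since $R$ is a range for both $\varphi_{i_k}$ and $\varphi_{i_k}^{-1}$), so the same inductive argument applies.

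There is no substantial obstacle here; the only point requiring care is ensuring that the inverse of the composition also has range at most $nR$, and this is built into the definition of $\Aut_R(X)$ rather than needing a separate argument.
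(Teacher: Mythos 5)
Your proof is correct and is exactly the argument the paper has in mind: the paper states this lemma without proof as an immediate consequence of the observation that a range-$R$ code determines coordinates in a window from a window enlarged by $R$ on each side, which is precisely your ``ranges add under composition'' fact, applied inductively and to the inverse as well. Nothing further is needed.
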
 

\subsection{Growth in groups and amenability} 
We recall some standard definitions and results; see, for example~\cite{harpe} for background.  
Suppose $G$ is a finitely generated group and $\mathcal{S}=\{g_1,\dots,g_m\}\subset G$ is a finite 
{\em symmetric generating set} for $G$, meaning that every element of $G$ can be written as a finite product of elements of $\mathcal{S}$ and that $\mathcal{S}$ is closed under inverses.  Then the {\em growth function} of the pair $(G,\mathcal{S})$ is the function 
$$ 
\gamma_G^{\mathcal{S}}(n):=\vert\{g_{i_1}g_{i_2}\cdots g_{i_k}\colon k\leq n, i_1,\dots,i_n\in\{1,\dots,m\}\}\vert,  
$$ 
which counts the number of distinct group elements that can be written as a product of at most $n$ elements of $\mathcal{S}$.  
If $x(n), y(n)\colon \N\to\N$ are nondecreasing functions, we write that $x\prec y$ if there constants $\lambda\geq 1$ and $C\geq 0$ such that $x(n)\leq \lambda y(\lambda n+C)+C$ and we write that $x\sim y$ if $x\prec y$ and $y\prec x$.  
A standard fact is that $\gamma_G^{\mathcal{S}}$ is submultiplicative, 
and that if $\mathcal{S}_1,\mathcal{S}_2$ are any two finite symmetric generating sets for $G$, 
then $\gamma_G^{\mathcal{S}_1}\sim\gamma_G^{\mathcal{S}_2}$.  
Thus we define 
$G$ to be a {\em group of exponential growth} if for some (equivalently, for every) finite symmetric generating set $\mathcal{S}$, 
$$ 
\lim_{n\to\infty}\frac{\log(\gamma_G^{\mathcal{S}}(n))}{n}>0. 
$$ 
If the limit is zero, we say that $G$ is a {\em group of subexponential growth}. 

Similarly, $G$ is a {\em group of polynomial growth rate $d$} if for some (equivalently, for every) finite symmetric generating set $\mathcal{S}$, 
$$ 
\limsup_{n\to\infty}\frac{\log(\gamma_G^{\mathcal{S}}(n))}{\log(n)}\leq d 
$$ 
and $G$ is a {\em group of weak polynomial growth rate $d$} if for some (equivalently, for every) finite symmetric generating set $\mathcal{S}$, 
$$ 
\liminf_{n\to\infty}\frac{\log(\gamma_G^{\mathcal{S}}(n))}{\log(n)}\leq d. 
$$ 
Finally, we say that $G$ is a {\em group of polynomial growth} if it is a group of polynomial growth rate $d$ for some $d\in\N$, and it is a {\em group of weak polynomial growth} if it is a group of weak polynomial growth rate $d$ for some $d\in\N$. 

A countable, discrete group $G$ is {\em amenable} if there exists a sequence $(F_k)_{k\in\N}$ of finite subsets of $G$ such that: 
\begin{enumerate} 
\item for all $g\in G$,  we have $g\in F_k$ for all but finitely many $k$; 
\item for all $g\in G$,  
$$ 
\lim_{k\to\infty}\frac{|F_k\triangle gF_k|}{|F_k|}=0. 
$$ 
\end{enumerate} 
In this case, the sequence $(F_k)_{k\in\N}$ is called a {\em F{\o}lner sequence} for $G$.

\section{Constraints on the automorphism group}

We start with a lemma which shows that if $P_X(n)$ grows slowly then there are words that extend uniquely a large number of times to both sides: 
\begin{lemma}\label{lemma:growth} 
Suppose $(X,\sigma)$ is a subshift and define 
$$ 
k_n:=\min\{k\in\N\colon\text{no word }w\in\mathcal{L}_n(X)\text{ extends uniquely $k$ times to the right and left}\}. 
$$ 
\begin{enumerate} 
\item (Polynomial growth) If there exists $d\in\N$ such that 
\begin{equation}\label{eq:eq1} 
\limsup_{n\to\infty}\frac{P_X(n)}{n^d}=0, 
\end{equation} 
then there exists $C>0$ such that for infinitely many $n\in\N$ we have $k_n>Cn$. 
\item (Stretched exponential growth) Let $0<\beta\leq1$ be fixed.  If 
\begin{equation}\label{eq:eq6} 
\limsup_{n\to\infty}\frac{\log(P_X(n))}{n^{\beta}}=0, 
\end{equation} 
then for any $C>0$ there are infinitely many $n\in\N$ such that $k_n>Cn^{1-\beta}$. 
\end{enumerate} 
\end{lemma}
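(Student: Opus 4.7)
The plan is to prove both parts via a single key inequality: if $k_n \leq k$ then $P_X(n+2k) \geq 2 P_X(n)$. To establish this, fix $w \in \mathcal{L}_n(X)$. The definition of $k_n$ forces $w$ to fail to extend uniquely $k$ times on at least one side, so without loss of generality there exist distinct right extensions $u_1, u_2 \in \mathcal{L}_{n+k}(X)$ of $w$. Choosing $x^{(j)} \in X$ with $x^{(j)}|_{[0,n+k)} = u_j$ and setting $v^{(j)} := x^{(j)}|_{[-k,\, n+k)}$ yields two distinct words in $\mathcal{L}_{n+2k}(X)$, each containing $w$ in the middle coordinates $[k, k+n)$. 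Since each element of $\mathcal{L}_{n+2k}(X)$ restricts to a unique length-$n$ middle segment, summing over $w \in \mathcal{L}_n(X)$ gives $P_X(n+2k) \geq 2 P_X(n)$.

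For part (i), I would argue by contradiction. Suppose that for every $C > 0$ one has $k_n \leq Cn$ for all sufficiently large $n$, and choose $C > 0$ so small that $(1+2C)^d < 2$. Starting from a suitable $n_0$, iterate $n_{j+1} := n_j + 2\lfloor C n_j \rfloor$; the key inequality then gives $P_X(n_j) \geq 2^j P_X(n_0)$, while $n_j \leq (1+2C)^j n_0 + O(1)$. Consequently $P_X(n_j)/n_j^d$ grows at least as fast as $\bigl(2/(1+2C)^d\bigr)^j$, which tends to infinity and contradicts~\eqref{eq:eq1}.

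For part (ii), I would proceed analogously. Assume for contradiction that some $C > 0$ satisfies $k_n \leq C n^{1-\beta}$ for all large $n$, and define $n_{j+1} := n_j + 2\lfloor C n_j^{1-\beta}\rfloor$. Comparison with the ODE $dn/dj = 2 C n^{1-\beta}$ shows $n_j^\beta \sim 2 C \beta j$, so the doubling recurrence $P_X(n_j) \geq 2^j P_X(n_0)$ gives
$$
\frac{\log P_X(n_j)}{n_j^\beta} \;\geq\; \frac{j \log 2}{n_j^\beta} \;\longrightarrow\; \frac{\log 2}{2 C \beta} \;>\; 0,
$$
contradicting~\eqref{eq:eq6}.

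The only delicate point in the plan is the key inequality itself: one must verify that failure of unique $k$-extension on a \emph{single} side already produces two distinct length-$(n+2k)$ extensions centered on $w$, and that every $w \in \mathcal{L}_n(X)$ admits at least one such centered extension (which holds because $w$ occurs in some $x \in X$, from which one can read off a window of length $n+2k$). Once that combinatorial fact is isolated, both parts reduce to routine iteration of a doubling recurrence against the hypothesized complexity bound.
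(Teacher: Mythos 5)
Your proposal is correct and follows essentially the same route as the paper: the same doubling inequality $P_X(n+2k)\geq 2P_X(n)$ for $k\geq k_n$ (obtained from the failure of unique $k$-fold extension on at least one side), followed by iteration of this inequality against the hypothesized complexity bound, by contradiction in both parts. The only cosmetic difference is in part (ii), where you track the number of doublings $j$ directly against $n_j^{\beta}$ (via concavity of $n\mapsto n^{\beta}$), whereas the paper compares the doubling time $d_n$ of $P_X$ with that of the model function $\lambda^{n^{\beta}}$; the two bookkeeping schemes produce the same contradiction.
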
 
\begin{proof} 
Without loss, it suffices to assume that $P_X(n)$ 
is nondecreasing, as otherwise $(X,\sigma)$ is a periodic system.  

First assume that $(X,\sigma)$ satisfies~\eqref{eq:eq1}.  
For contradiction, suppose that 
for all $C>0$ there exists $N = N(C)\in\N$ such that $k_n\leq Cn$ for all $n\geq N$.  Pick a constant $C$ such that 
$0 < C < (2^{1/d}-1)/2$.  By the definition of $k_n$, any word of length $n$ extends in at least two ways to a word of length $n+2k_n$ (adding $k_n$ letters on each side), and any two words of length $n$ extend to different words of length $n+2k_n$ by this procedure.  So we have $P_X(n+2k_n)\geq2P_X(n)$ for all $n\in\N$.  Since $P_X$ is nondecreasing, it follows that for all $n\geq N$ we have $P_X(\lceil(1+2C)n\rceil)\geq2P_X(n)$.  By induction, it follows that for any $i\in\N$, 
\begin{equation}\label{eq:eq3} 
P_X(\lceil(1+2C)^iN\rceil)\geq2^iP_X(N).  
\end{equation} 
But by~\eqref{eq:eq1}, 
\begin{equation}\label{eq:eq4} 
P_X(\lceil(1+2C)^iN\rceil)\leq\left\lceil(1+2C)^{id}N^d\right\rceil=\left\lceil\left((1+2C)^d\right)^iN^d\right\rceil 
\end{equation} 
for all sufficiently large $i$.  By choice of $C$,  we have $(1+2C)^d<2$ and so~\eqref{eq:eq3} and~\eqref{eq:eq4} are incompatible for all sufficiently large $i$. The first statement follows.

If $(X,\sigma)$ satisfies~\eqref{eq:eq6}, define 
$$ 
d_n:=\min\{m\in\N\colon P_X(n+m)\geq2P_X(n)\} 
$$ 
to be the sequence of doubling times for $P_X$.  As in the case 
of polynomial growth, we have the trivial inequality that $d_n\leq2k_n$ for all $n\in\N$.  Again 
we proceed by contradiction and assume that 
there exists $C>0$ and $N = N(C)\in\N$ such that for all $n\geq N$ we have $k_n\leq Cn^{1-\beta}$.  Thus $d_n\leq(2C)n^{1-\beta}$ for all $n\geq N$.  If $\lambda>1$, then the doubling time for $\lambda^{n^{\beta}}$ is 
$$ 
D_n(\lambda):=\min\{m\in\N\colon\lambda^{(n+m)^{\beta}}\geq2\lambda^{n^{\beta}}\}=\left\lceil n\cdot\left(1+\log(2)/(n^{\beta}\log(\lambda))\right)^{1/\beta}-n\right\rceil. 
$$ 
Expanding the first term as a binomial series, it follows that $D_n(\lambda)$ is asymptotically 
$$ 
\frac{\log(2)}{\beta\log(\lambda)}n^{1-\beta}+o(n^{1-\beta}). 
$$ 
Thus we can find $1<\lambda<2^{1/2\beta C}$ and $M\geq N$ such that $d_n<D_n(\lambda)$ for all $n\geq M$.  
We recursively define a sequence $\{a_i\}$ by setting $a_0:=M$ and $a_{i+1}=a_i+d_{a_i}$.  If $P_X(a_i)\geq\lambda^{a_i^{\beta}-a_0^{\beta}}\cdot P_X(a_0)$, then 
$$ 
P_X(a_{i+1})\geq2P_X(a_i)\geq2\lambda^{a_i^{\beta}-a_0^{\beta}}\cdot P_X(a_0)\geq\lambda^{a_{i+1}^{\beta}-a_0^{\beta}}\cdot P_X(a_0) 
$$ 
where the last inequality holds because $a_{i+1}<a_i+D_{a_i}(\lambda)$.  By induction, $P_X(a_i)\geq\lambda^{a_i^{\beta}-a_0^{\beta}}\cdot P_X(a_0)$ for all $i$.  But then  
$$ 
\log(P_X(a_i))\geq\log(\lambda)\cdot a_i^{\beta}-\log(\lambda)\cdot a_0^{\beta}-\log(P_X(a_0)) 
$$ 
for all $i$, a contradiction of~\eqref{eq:eq6}. 
\end{proof}

\begin{lemma}\label{lemma:finite} 
Suppose $k\in\N$ and $\beta<1/2$ are fixed.  Then for all sufficiently large $N\in\N$, if $f\colon\{1,2,\dots,N\}\to\N$ is a nondecreasing function and $f(N)\leq\exp\hspace{-0.02 in}\big(N^{\beta/(1-\beta)}\big)$, then there exists an integer $M$ with $N/3\leq M\leq N-k$ and such that 
$$f(M+k)\leq f(M)\cdot\exp\hspace{-0.02 in}\big(M^{(2\beta-1)/(2-2\beta)}\big).
$$
\end{lemma}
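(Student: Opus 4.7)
The plan is to argue by contradiction: if no suitable $M$ exists in the window $[N/3, N-k]$, then $f$ must multiplicatively grow by at least $\exp(M^\alpha)$ on every length-$k$ subinterval, where
$$
\alpha := \frac{2\beta-1}{2-2\beta}.
$$
Chaining these increments along an arithmetic progression of step $k$ will force $\log f(N)$ to exceed the allowed bound $N^{\beta/(1-\beta)}$.

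Concretely, I would set $M_0 := \lceil N/3\rceil$ and define $M_{j+1} := M_j + k$, stopping at the last index $j_{\max}$ with $M_{j_{\max}} \leq N-k$, so that $j_{\max}$ is of order $2N/(3k)$. Under the contradictory assumption, $f(M_{j+1}) > f(M_j)\exp(M_j^\alpha)$, so telescoping and taking logarithms yields
$$
\log f(M_{j_{\max}+1}) \;>\; \log f(M_0) + \sum_{j=0}^{j_{\max}} M_j^\alpha.
$$
Since $\beta < 1/2$, the exponent $\alpha$ is negative, and each $M_j \leq N$, so $M_j^\alpha \geq N^\alpha$. Together with $j_{\max} \gtrsim 2N/(3k)$, this gives a lower bound of the form $c_k\, N^{1+\alpha}$ with $c_k > 0$ depending only on $k$ (and $\beta$). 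A direct simplification shows $1+\alpha = 1/(2-2\beta)$.

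The key comparison is between the two exponents: since $\beta < 1/2$,
$$
\frac{1}{2-2\beta} \;>\; \frac{\beta}{1-\beta},
$$
because the ratio of the left side to the right is $1/(2\beta) > 1$. Consequently, the lower bound $\log f(M_{j_{\max}+1}) \geq c_k N^{1/(2-2\beta)}$ eventually dominates the hypothesized upper bound $\log f(N) \leq N^{\beta/(1-\beta)}$, and since $M_{j_{\max}+1} \leq N$ and $f$ is nondecreasing this is a contradiction for all sufficiently large $N$.

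The routine step is verifying the exponent identity $1+\alpha = 1/(2-2\beta)$ and the comparison $1/(2-2\beta) > \beta/(1-\beta)$ under $\beta < 1/2$; these are algebraic and pose no obstacle. The only mild subtlety is keeping track of the boundary terms in the chain (e.g.\ ensuring $M_{j_{\max}+1} \leq N$, and that $\log f(M_0) \geq 0$ since $f$ takes values in $\N$). There is no real obstacle here beyond choosing the threshold $N$ large enough in terms of $\beta$ and $k$ so that the polynomial gap between $N^{1/(2-2\beta)}$ and $N^{\beta/(1-\beta)}$ absorbs the constant $c_k$.
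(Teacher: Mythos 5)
Your proof is correct and follows essentially the same route as the paper: assume no such $M$ exists, telescope the multiplicative increments along an arithmetic progression of step $k$ in $[N/3, N-k]$, and derive a lower bound of order $N^{1/(2-2\beta)}$ for $\log f(N)$ that contradicts the hypothesis $\log f(N)\leq N^{\beta/(1-\beta)}$ since $1/(2-2\beta)>\beta/(1-\beta)$ when $\beta<1/2$. The only (harmless) difference is that you bound each term of the sum below by $N^{\alpha}$ directly, while the paper compares the sum to the integral $\int_0^n(\tilde N/3+kx)^{\alpha}\,dx$; both yield the same exponent.
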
 
\begin{proof} 
Suppose not.  Then for infinitely many $N$, there exists $f_N\colon\{1,\dots,N\}\to\N$ such that for all integers $M$ satisfying 
$N/3\leq M\leq N-k$,  we have $f_N(M+k)>f_N(M)\cdot\exp(M^{\frac{2\beta-1}{2-2\beta}})$.  Let $\tilde{N}$ be the largest multiple of $3k$ less than or equal to $N$.    Then for $n\leq 2\tilde{N}/3k$, we have 
\begin{eqnarray*} 
\log\bigl(f_N(\tilde{N}/3+kn)\bigr)  && \\ 
&&\hspace{-1 in}> \log\bigl(f_N(\tilde{N}/3)\bigr)+\sum_{m=0}^{n-1}(\tilde{N}/3+km)^{\frac{2\beta-1}{2-2\beta}} \\ 
&&\hspace{-1 in}> \log\bigl(f_N(\tilde{N}/3)\bigr)+\int_0^n(\tilde{N}/3+kx)^{\frac{2\beta-1}{2-2\beta}}\,dx \\ 
&&\hspace{-1 in}= \log\bigl(f_N(\tilde{N}/3)\bigr)+\frac{2-2\beta}{k}\cdot\left((\tilde{N}/3+kn)^{\frac{1}{2-2\beta}}- 
(\tilde{N}/3)^{\frac{1}{2-2\beta}}\right). 
\end{eqnarray*} 
Taking $n=2\tilde{N}/3k$, 
\begin{eqnarray*} 
\log(f_N(\tilde{N})) & >  & \log(f_N(\tilde{N}/3))+\frac{2-2\beta}{k}\cdot\left(\tilde{N}^{\frac{1}{2-2\beta}}- (\tilde{N}/3)^{\frac{1}{2-2\beta}}\right) \\ 
& > & \frac{(2-2\beta)\cdot\tilde{N}^{\frac{1}{2-2\beta}}}{k}\cdot\left(1-\frac{1}{3^{1/(2-2\beta)}}\right). 
\end{eqnarray*} 
But 
$\frac{1}{2-2\beta}>\frac{\beta}{1-\beta}$, and so for all sufficiently large $N$, and thus sufficiently large $\tilde{N}$, this contradicts 
\begin{equation*} 
\log(f_N(\tilde{N}))\leq N^{\beta/(1-\beta)}\leq(\tilde{N}+3k)^{\beta/(1-\beta)}. \hfill\qedhere
\end{equation*} 
\end{proof}

\begin{lemma}\label{lemma:group} 
Suppose $(X,\sigma)$ is minimal and $w\in\mathcal{L}(X)$.  Then the subgroup $G_w$ of $\Aut(X)$ generated by 
$S_w$, where 
$$ 
S_w:=\left\{\varphi\in\Aut_{\lfloor|w|/2\rfloor}(X)\colon\varphi[w]_0^+\subseteq[w]_0^+\right\}, 
$$ 
is finite. 
\end{lemma}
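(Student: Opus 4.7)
The plan is to construct an injective homomorphism from $G_w$ into a product of constrained homeomorphism groups and argue finiteness of the image.

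First, I would upgrade the defining inclusion for $S_w$ to an equality: for every $\varphi\in S_w$, I claim $\varphi[w]_0^+=[w]_0^+$. Given any $\sigma$-invariant probability measure $\mu$ on $X$, the Cesaro averages $\frac{1}{N}\sum_{k=0}^{N-1}\varphi^k_*\mu$ are $\sigma$-invariant (since $\varphi$ commutes with $\sigma$), and any weak-$*$ subsequential limit $\mu_*$ is invariant under both $\sigma$ and $\varphi$. By minimality of $(X,\sigma)$, the measure $\mu_*$ has full support. If $\varphi[w]_0^+\subsetneq[w]_0^+$, then $W:=[w]_0^+\setminus\varphi[w]_0^+$ is a nonempty clopen set with $\mu_*(W)>0$; but the sets $\varphi^{-n}W$ for $n\geq 0$ are pairwise disjoint of equal positive $\mu_*$-measure, contradicting $\mu_*(X)=1$. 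Hence equality holds, $S_w$ is closed under inverses, and every $\psi\in G_w$ restricts to a bijection $[w]_0^+\to[w]_0^+$.

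Second, I would exploit the range bound $R=\lfloor|w|/2\rfloor$. For $\varphi\in S_w$ and $x\in[w]_0^+$, the block-code dependencies, combined with the forced values $\varphi(x)_k=w_k$ for $0\leq k\leq|w|-1$, show that $\varphi(x)_k$ for $k\leq -1$ depends only on $x$ restricted to $(-\infty,-1]$, while $\varphi(x)_k$ for $k\geq|w|$ depends only on $x$ restricted to $[|w|,\infty)$. Hence $\varphi|_{[w]_0^+}$ decomposes as a product $(\bar\varphi^L,\bar\varphi^R)$ of homeomorphisms of the sets $\Lambda$ and $\Pi$ of left, respectively right, extensions of $w$ in $X$. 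A straightforward induction on the number of $S_w$-factors shows that this factorization persists for every $\psi\in G_w$, with $\bar{\psi}^L$ being the composition of the generators' $\bar{\varphi_i}^L$, and similarly for $\bar\psi^R$. This yields a homomorphism $\psi\mapsto(\bar\psi^L,\bar\psi^R)$ from $G_w$ into $\operatorname{Homeo}(\Lambda)\times\operatorname{Homeo}(\Pi)$. It is injective because by minimality the $\sigma$-orbit of $[w]_0^+$ covers $X$ and $\psi$ commutes with $\sigma$, so $\psi$ is determined by $\psi|_{[w]_0^+}$, which in turn is encoded by $(\bar\psi^L,\bar\psi^R)$.

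The main obstacle is the concluding step: showing that the image of this homomorphism is finite. Naive range bounds on compositions of $S_w$-elements grow linearly in the number of factors, so block-code counting alone does not suffice. Finiteness must instead come from the rigidity of the structure that the induced pairs preserve: the image $\Omega\subseteq\Lambda\times\Pi$ of $[w]_0^+$ is a closed subset encoding the $X$-compatibility constraint between left and right extensions, and any $(\bar\psi^L,\bar\psi^R)$ coming from $G_w$ must preserve $\Omega$. Exploiting minimality together with the subshift-like structure on $\Omega$, the subgroup generated by the finitely many $S_w$-induced pairs inside the stabilizer of $\Omega$ is forced to be finite. This yields $|G_w|<\infty$.
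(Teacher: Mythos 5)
Your first step is correct and is a legitimate alternative to the paper's route: the paper shows that $\varphi^{P_X(K_w)!}$ is the identity in order to conclude that $S_w$ is closed under inverses, whereas your invariant-measure argument gives $\varphi[w]_0^+=[w]_0^+$ directly. The left/right splitting in your second step is also essentially sound, since the range bound $\lfloor|w|/2\rfloor$ does force the part of $\varphi(x)$ to the left (resp.\ right) of the $w$-block to depend only on the corresponding half of $x$ (though you would still need to check that the induced maps are genuinely bijections of $\Lambda$ and $\Pi$ rather than just of $\Omega$). The genuine gap is exactly where you flag ``the main obstacle'': you never prove that the image of $G_w$ in $\operatorname{Homeo}(\Lambda)\times\operatorname{Homeo}(\Pi)$ is finite. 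The assertion that the subgroup generated by the induced pairs ``is forced to be finite'' by rigidity and the subshift-like structure of $\Omega$ is not an argument; a finitely generated group of homeomorphisms preserving a closed subset of a product can perfectly well be infinite, and nothing in your setup rules this out. Since this is the entire content of the lemma, the proof is incomplete.

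The missing idea is the specific way minimality enters. Because $(X,\sigma)$ is minimal, the return time of $[w]_0^+$ is bounded by some $K_w<\infty$, so every $x\in X$ is, up to a shift, a concatenation $\cdots wu_{-1}wu_0wu_1w\cdots$ with $|u_i|\le K_w$. Because every $\varphi\in S_w$ has range at most $\lfloor|w|/2\rfloor$ and maps $[w]_0^+$ into itself, it sends each cylinder $[wuw]_0^+$ into a cylinder $[wvw]_0^+$ with $|v|=|u|$ and with $v$ depending only on $u$; composing generators shows the same holds for every $\psi\in G_w$, even though the range of $\psi$ grows. Hence an element of $G_w$ is completely determined by the induced map on the finitely many words $u$ with $|u|\le K_w$ and $wuw\in\mathcal{L}(X)$, and there are only finitely many such maps, so $G_w$ is finite. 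It is this locality relative to the syndetic occurrences of $w$ --- not the left/right factorization --- that defeats the linear growth of ranges under composition; without it the finiteness claim remains unsupported.
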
 
\begin{proof} 
Let $\varphi\in S_w$ be fixed.  
By the definition of $S_w$, if $u\in\mathcal{L}(X)$ and $x\in[wuw]_0^+$, 
then there exists $v_x\in\mathcal{L}_{|u|}(X)$ such that $\varphi(x)\in[wv_xw]_0^+$.  However, since the range of $\varphi$ is at most $\lfloor|w|/2\rfloor$, the word $v_x$ does not depend on 
choice of $x\in[wuw]_0^+$, meaning there exists $v\in\mathcal{L}_{|u|}(X)$ such that for all $x\in[wuw]_0^+$ we have $v_x=v$.  In other words, $\varphi[wuw]_0^+\subseteq[wvw]_0^+$.  
Since $(X,\sigma)$ is minimal, we have that the quantity
$$ 
K_w:=\max\{K\geq1\colon\text{ there exists } x\in[w]_0^+\text{ such that }\sigma^kx\notin[w]_0^+\text{ for all }0<k<K\} 
$$ 
is finite.  
Thus if $x\in X$, then there is a bi-infinite sequence of words of bounded length $\dots,u_{-2},u_{-1},u_0,u_1,u_2,\ldots\in\bigcup_{k=0}^{K_w}\mathcal{L}_k(X)$ such that $x$ has the form 
$$ 
\cdots wu_{-2}wu_{-1}wu_0wu_1wu_2w\cdots.
$$ 
Since $P_X$ is nondecreasing, it follows from these observations 
that $\varphi^{P_X(K_w)!}(x)=x$.  Since $\varphi^{P_X(K_w)!}$ is an automorphism of $(X,\sigma)$ and the orbit of $x$ is dense in $X$, $\varphi^{P_X(K_w)!}$ 
is the identity map.  Thus $\varphi^{-1}[w]_0^+\subseteq[w]_0^+$.  

Moreover, since $\varphi\in\Aut_{\lfloor|w|/2\rfloor}(X)$, we have $\varphi^{-1}\in\Aut_{\lfloor|w|/2\rfloor}(X)$, and so $S_w$ is closed under the operation of taking inverses.  Let $G_w$ denote the subgroup of $\Aut(X)$ generated by $S_w$.  It is immediate that if $\psi\in G_w$, 
then $\psi[w]_0^+\subseteq[w]_0^+$ (but the range of $\psi$ may not be bounded by $\lfloor|w|/2\rfloor$ and so it is not necessarily the case that $\psi\in S_w$).  As before, if $u\in\mathcal{L}(X)$ and $x\in[wuw]_0^+$, then there exists $v_x\in\mathcal{L}_{|u|}(X)$ such that $\varphi(x)\in[wv_xw]_0^+$.  We claim that the word $v_x$ is 
independent of $x$  for $x\in [wuw]_0^+$.  Writing $\psi$ as a product of elements of $S_w$, 
$$ 
\psi=\varphi_k\circ\varphi_{k-1}\circ\cdots\circ\varphi_1 
$$ 
where $\varphi_1,\dots,\varphi_k\in S_w$, there exist words $v_1,\dots,v_k\in\mathcal{L}_{|u|}(X)$ such that 
$$ 
\varphi_i[wv_{i-1}w]_0^+\subseteq[wv_iw]_0^+ 
$$ 
(here we take $v_0:=u$).  Thus $\psi[wuw]_0^+\subseteq[wv_kw]_0^+$.  Therefore, an element of $G_w$ is determined once we know the image of every cylinder set of the form $[wuw]_0^+$ with $|u|\leq K_w$.  There are only finitely many such sets and only finitely many possible images for each, and so $G_w$ is finite. 
\end{proof}

\begin{theorem}\label{thm:amenable} 
Suppose $(X,\sigma)$ is a minimal subshift and there exists $\beta<1/2$ such that 
\begin{equation}\label{eq:eq7} 
\limsup_{n\to\infty}\frac{\log(P_X(n))}{n^{\beta}}=0. 
\end{equation} 
Then $\Aut(X)$ is amenable. 
\end{theorem}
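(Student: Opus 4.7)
The plan is to reduce amenability of $\Aut(X)$ to showing that every finitely generated subgroup $G \le \Aut(X)$ is amenable, which we derive by proving that $G$ has subexponential growth. Since amenability of a countable discrete group is equivalent to amenability of each of its finitely generated subgroups, this suffices.

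Fix $G = \langle \varphi_1, \ldots, \varphi_m \rangle \le \Aut(X)$ with symmetric generating set $\mathcal{S}$, all of whose elements share a common range $R$. By Lemma~\ref{lemma:growth2}, every element of the word-length ball $B_n \subset G$ lies in $\Aut_{nR}(X)$, so the goal is to bound $|B_n|$ subexponentially in $n$. The idea is to map $B_n$ into a set of ``local images'' of a carefully chosen cylinder whose cardinality is controlled by $P_X$, with fibers controlled by Lemma~\ref{lemma:group}. Concretely, for each large $n$ we use Lemma~\ref{lemma:growth} to select a length $N$ and a word $w \in \mathcal{L}_N(X)$ extending uniquely more than $2nR$ times in each direction, so the cylinder $[w]_0^+$ is rigid well beyond its nominal length. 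Each $g \in B_n$ determines an image word $g(w) \in \mathcal{L}_{N-2nR}(X)$. Two elements $g_1, g_2 \in B_n$ with $g_1([w]_0^+) = g_2([w]_0^+)$ differ by an automorphism stabilizing $[w]_0^+$ setwise; arranging $N \ge 4nR$ so that $g_2^{-1} g_1$ has range at most $\lfloor N/2 \rfloor$, such differences lie in the finite group $G_w$ of Lemma~\ref{lemma:group}. Consequently $|B_n| \le |G_w| \cdot P_X(N-2nR)$.

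The crux is to choose $N = N(n)$ so that this upper bound is subexponential in $n$, and this is where Lemma~\ref{lemma:finite} enters decisively. Since $\beta < 1/2$, the exponent $(2\beta-1)/(2-2\beta)$ is strictly negative, so Lemma~\ref{lemma:finite} produces, inside an interval comparable to any given base length, an integer $M$ at which $P_X$ grows by a factor tending to $1$ over a window of size $\sim nR$. Iterating (or suitably chaining) this small-growth step across the relevant range, and starting from a length in the regime where Lemma~\ref{lemma:growth} still supplies words with the required number of unique extensions, yields a subexponential upper bound on $P_X(N - 2nR)$ as a function of $n$. Combined with the bound above, this gives subexponential growth of $|B_n|$, hence of $G$, hence $G$ is amenable. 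The counting argument nowhere used torsion-freeness of $G$, so every finitely generated subgroup of $\Aut(X)$ is amenable and $\Aut(X)$ itself is amenable.

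The main obstacle is the quantitative balancing of three constraints that must hold simultaneously: (i) the unique-extension count of $w$ must dominate $2nR$ so Lemma~\ref{lemma:group} can be applied to differences of ball elements; (ii) $N$ must be large enough, and positioned correctly, to fall inside the regime in which Lemma~\ref{lemma:finite} applies with $k$ of order $nR$; and (iii) iterating the resulting slow-growth step must still yield a subexponential rather than a merely stretched-exponential bound on $P_X(N-2nR)$ in $n$. Item (iii) is exactly where the hypothesis $\beta < 1/2$ is used; with $\beta \ge 1/2$ the exponent in Lemma~\ref{lemma:finite} is nonnegative and the scheme collapses, which mirrors the fact that the authors do not know whether the theorem extends to that range.
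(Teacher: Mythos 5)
There is a genuine gap, and it sits exactly where your plan diverges from the paper. Your strategy is to reduce amenability to showing that \emph{every} finitely generated subgroup $G\le\Aut(X)$ has subexponential growth, via the bound $|B_n|\le |G_w|\cdot P_X(N-2nR)$. The factor $P_X(N-2nR)$ is indeed subexponential in $n$ (since Lemma~\ref{lemma:growth} forces $N\lesssim (nR)^{1/(1-\beta)}$ and $\beta<1/2$ gives $\beta/(1-\beta)<1$; you do not actually need Lemma~\ref{lemma:finite} for this). The problem is the factor $|G_w|$. The word $w=w(n)$ changes with $n$, and Lemma~\ref{lemma:group} only says $G_{w(n)}$ is finite: its order is controlled by the return time $K_{w(n)}$ of the cylinder $[w(n)]_0^+$, which for a general minimal shift is not bounded by any function of the complexity and can grow arbitrarily fast with $n$. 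So $|B_n|\le |G_{w(n)}|\cdot P_X(N-2nR)$ gives no subexponential bound, and your remark that ``the counting argument nowhere used torsion-freeness'' is precisely the symptom: the paper's Theorem~\ref{thm:subexponential} kills the $|G_{\tilde w}|$ factor only by assuming $G$ torsion free, so that $G\cap G_{\tilde w}$ is trivial and $\varphi\mapsto\varphi(\tilde w)$ is injective on $B_n$. For subgroups with torsion your bound collapses, and the paper does not (and apparently cannot, by these methods) prove subexponential growth for arbitrary finitely generated subgroups.

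The paper's actual proof of Theorem~\ref{thm:amenable} circumvents this by never bounding $|F_k|$ at all: it builds F{\o}lner sets for the full group $\Aut(X)$ as unions of \emph{entire cosets} of $G_{\tilde w}$, so that the uncontrolled quantity $|G_{\tilde w}|$ cancels in the ratio $|F_k\triangle\varphi F_k|/|F_k|$. This is also where Lemma~\ref{lemma:finite} is really used: applied to $f(n)=|\{\varphi(\tilde w):\varphi\in\Aut_n(X)\}|$, it locates a single scale $M\in[R,3R-k]$ at which the number of cosets needed to cover $\Aut_{M+k}(X)$ exceeds the number needed for $\Aut_M(X)$ by a factor $\exp\bigl(M^{(2\beta-1)/(2-2\beta)}\bigr)$ close to $1$; that near-stability is the F{\o}lner condition. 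To repair your argument you would either need a uniform (subexponential in $n$) bound on $|G_{w(n)}|$, which is not available, or you would need to switch to the coset-counting/F{\o}lner formulation, at which point you are running the paper's proof.
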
 
\begin{proof} 
We show the amenability of $\Aut(X)$ by constructing a F{\o}lner sequence.  

\subsection*{Step 1 (Extendable Words)}  
By Lemma~\ref{lemma:growth}, there are infinitely many integers $n$ for which there exists a word of length $n$ that extends uniquely at least $n^{1-\beta}$ many times to both the right and the left.  Let $R\in\N$ be fixed and find $n>(4R)^{1/(1-\beta)}$ such that there exists a word $w\in\mathcal{L}_n(X)$ which extends uniquely at least $2R$ times to both the right and to the left.  
Let $\tilde{w}\in\mathcal{L}_{n+4R}(X)$ be the word obtained by extending $w$ exactly $2R$ times to both the right and 
to the left.  Then if $\varphi_1,\varphi_2\in\Aut_R(X)$ are such that $\varphi_1(\tilde{w})=\varphi_2(\tilde{w})$ (recall that $\varphi_i(\tilde{w})$ is a word of length $n+2R$), we have $\varphi_1^{-1}(\varphi_2(\tilde{w}))=w$ (a word of length $n$).  But by construction of $w$, we have $x\in\sigma^{-2R}[w]_0^+$:  
$$ 
(x_{2R},x_{2R+1},\dots,x_{2R+|w|-1})=w 
$$ 
if and only if $x\in[\tilde{w}]_0^+$.  It follows that $(\varphi_1^{-1}\circ\varphi_2)[\tilde{w}]_0^+\subseteq[\tilde{w}]_0^+$.  
Since $R\leq\lfloor|\tilde{w}|/4\rfloor$, it follows from Lemma~\ref{lemma:group} that 
$\varphi_1^{-1}\circ\varphi_2$ is an element of the set $S_{\tilde{w}}$.  If $G_{\tilde{w}}$ is the subgroup of $\Aut(X)$ generated by $S_{\tilde{w}}$, then we have shown that for $\varphi_1, \varphi_2\in\Aut_R(X)$,  
\begin{equation}\label{eq:condition} 
\text{$\varphi_1$ and $\varphi_2$ are in the same coset of $G_{\tilde{w}}$ if and only if $\varphi_1(\tilde{w})=\varphi_2(\tilde{w})$.} 
\end{equation} 
Consequently, $\Aut_R(X)$ is contained in the union of at most $P_X(|\tilde{w}|-2R)$ many cosets of $G_{\tilde{w}}$. 

\subsection*{Step 2 (Estimating Coset Growth)} We claim that for any fixed $k\in\N$, 
there exists a finite set $F_k\subset\Aut(X)$ such that $\Aut_k(X)\subset F_k$
and furthermore, if $\varphi\in\Aut_k(X)$ then 
$$ 
\frac{|F_k\triangle\varphi F_k|}{|F_k|}<2\exp\hspace{-0.02 in}\big(k^{(2\beta-1)/(2-2\beta)}\big)-2. 
$$ 
The sequence $(F_k)_{k\in\N}$ constructed in this way is then the desired F{\o}lner sequence.  

To construct the sets, fix $k\in\N$ and find $N>k$ sufficiently large that $P_X(n)\leq\exp\hspace{-0.02 in}\big(\frac{n^{\beta}}{4^{\beta/(1-\beta)}}\big)$ for all $n\geq N$.  By Lemma~\ref{lemma:growth}, there are infinitely many $n\geq N$ for which there exists a word $w\in\mathcal{L}_n(X)$ that extends uniquely at least $n^{1-\beta}$ times to both the right and the left.  By Lemma~\ref{lemma:finite}, for all sufficiently large $R$ and for any increasing $f\colon\{1,2,\dots,3R\}\to\N$, there exists $R\leq M\leq 3R-k$ such that 
\begin{equation}\label{eq:refer} 
f(M+k)\leq f(M)\cdot\exp\hspace{-0.02 in}\big(M^{(2\beta-1)/(2-2\beta)}\big). 
\end{equation}  

Fix $n\geq N$ such that there exists $w\in\mathcal{L}_n(X)$ that extends at least $n^{1-\beta}$ times to both the right 
and the left, and sufficiently large that if $R:=\lfloor n^{1-\beta}/9\rfloor$ then~\eqref{eq:refer} holds and $6R<n^{1-\beta}$.  Then $w$ extends uniquely at least $6R$ times to both the right and the left, and moreover we have $|w|\leq(9R)^{1/(1-\beta)}$.  Let $\tilde{w}$ be the word obtained by extending $w$ exactly $6R$ times to both the right and the left.  
Recall that if $\varphi\in\Aut_{R}(X)$, then $\varphi$ may also be thought of as a range $R+1$ automorphism, meaning 
there is a natural embedding $\Aut_{R}(X)\hookrightarrow\Aut_{R+1}(X)$.   
(The distinction between $\Aut_{R}(X)$ and its embedded image is simply that an element of $\Aut_{R}(X)$ takes a word of length $n$ to a word of length $n-2R$, 
whereas an element of $\Aut_{R+1}(X)$ takes a word of length $n$ to a word of length $n-2R-2$.)  
We use the notation $\Aut_{R}(X)\hookrightarrow\Aut_S(X)$ to refer to the embedded image of $\Aut_{R}(X)$ in $\Aut_S(X)$ for $S\geq R$.  

Define $f\colon\{1,2,\dots,3R\}\to\N$ to be 
$$ 
f(n)=\bigl\vert\left\{\varphi(\tilde{w})\in\mathcal{L}_{|w|+6R}(X)\colon\varphi\in\Aut_n(X)\hookrightarrow\Aut_{3R}(X)\right\}\bigr\vert. 
$$ 
From the inclusion $\Aut_i(X)\hookrightarrow\Aut_{i+1}(X)$,  it follows that $f(n)$ is nondecreasing.  Furthermore, 
\begin{eqnarray*} 
f(3R)&\leq&P_X(|w|+3R) \\ 
&\leq&P_X(\lfloor(9R)^{1/(1-\beta)}\rfloor+3R) \\ 
&\leq&P_X\hspace{-0.02 in}\left(\left\lfloor(12R)^{1/(1-\beta)}\right\rfloor\right) \\ 
&\leq&\exp\bigl((3R)^{\beta/(1-\beta)}\bigr).
\end{eqnarray*} 
By~\eqref{eq:refer}, there exists $R\leq M\leq 3R-k$ such that the inequality $f(M+k)\leq f(M)\cdot\exp\hspace{-0.02 in}\big(M^{(2\beta-1)/(2-2\beta)}\big)$ holds.  Since $f$ is nondecreasing, it follows immediately that 
\begin{equation*} 
f(M+i)\leq f(M)\cdot\exp\hspace{-0.02 in}\big(M^{(2\beta-1)/(2-2\beta)}\big)\text{ for all }1\leq i\leq k 
\end{equation*} 
(note that we only make use of this for $i=k$).  
Since for all $1\leq i\leq k$, the set $\Aut_{M+i}(X)$ is covered using~\eqref{eq:condition} by at most $f(M+i)$ cosets of $G_{\tilde{w}}$, we can define $F_k$ to be the set
$$ 
F_k:=\bigcup_{i=1}^{f(M)}\varphi_i\cdot G_{\tilde{w}}, 
$$ 
where $\varphi_i\in\Aut_M(X)$ for all $i$.  It is immediate that $\Aut_M(X)\subset F_k$ and hence $\Aut_k(X)\subset F_k$. 
If $\varphi\in\Aut_k(X)$, then $\varphi\circ\varphi_i\in\Aut_{M+k}(X)$ for all $i$.  Since $\Aut_{M+k}(X)$ can be covered by $f(M+k)\leq f(M)\cdot\exp\hspace{-0.02 in}\big(M^{(2\beta-1)/(2-2\beta)}\big)$ cosets of $G_{\tilde{w}}$, it follows that at most $f(M)\cdot\left(-1+\exp\hspace{-0.02 in}\big(M^{(2\beta-1)/(2-2\beta)}\big)\right)$ additional cosets (beyond those that already appear in the definition of $F_k$) are needed to cover $\Aut_{M+k}(X)$, and hence to also cover $\varphi\cdot F_k$.  Consequently, 
$$ 
\frac{|F_k\triangle\varphi F_k|}{|F_k|}\leq2\exp\hspace{-0.02 in}\big(M^{(2\beta-1)/(2-2\beta)}\big)-2\leq2\exp\hspace{-0.02 in}\big(k^{(2\beta-1)/(2-2\beta)}\big)-2. 
$$ 
Since $\beta<1/2$, this quantity tends to $0$ as $k\to\infty$.  Therefore, $(F_k)_{k\in\N}$ is a F{\o}lner sequence and $\Aut(X)$ is amenable. 
\end{proof}

We complete the proof of Theorem~\ref{th:stretched} with: 
\begin{theorem}\label{thm:subexponential} 
Suppose $(X,\sigma)$ is a minimal subshift and there exists $\beta<1/2$ such that 
$$ 
\limsup_{n\to\infty}\frac{\log(P_X(n))}{n^{\beta}}=0. 
$$ 
Then every finitely generated and torsion free subgroup of $\Aut(X)$ has subexponential growth.  Moreover, there are infinitely many $n$ for which the growth function of any such subgroup is at most $\exp(n^{\beta/(1-\beta)})$. 
\end{theorem}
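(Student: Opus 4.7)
The plan is to sharpen the coset setup from Step~1 of the proof of Theorem~\ref{thm:amenable} by exploiting the torsion-free hypothesis on $H\leq\Aut(X)$. The key observation is that Lemma~\ref{lemma:group} shows $G_{\tilde w}$ is finite, and any torsion free group contains no non-trivial finite subgroup, so $H\cap G_{\tilde w}=\{e\}$. By~\eqref{eq:condition}, two elements of $\Aut_R(X)$ lie in the same coset of $G_{\tilde w}$ iff they act identically on $\tilde w$, so distinct elements of $H\cap\Aut_R(X)$ yield distinct images $\varphi(\tilde w)$, giving the sharpened bound
$$|H\cap\Aut_R(X)|\leq P_X(|\tilde w|-2R).$$

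To exploit this, fix a finite symmetric generating set $\mathcal{S}$ for $H$ and an integer $R_0$ with $\mathcal{S}\subset\Aut_{R_0}(X)$; Lemma~\ref{lemma:growth2} then gives $\gamma_H^{\mathcal{S}}(n)\leq|H\cap\Aut_{nR_0}(X)|$. Next, apply part~(ii) of Lemma~\ref{lemma:growth} with $C=1$: there are infinitely many $m\in\N$ for which some $w\in\mathcal{L}_m(X)$ extends uniquely at least $m^{1-\beta}$ times to each side. For each such $m$, set $R:=\lfloor m^{1-\beta}/2\rfloor$ and let $\tilde w$ denote the $2R$-fold extension of $w$. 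Then $|\tilde w|-2R=m+2R\leq 2m$, and combining the previous two inequalities at $n(m):=\lfloor R/R_0\rfloor$ yields
$$\gamma_H^{\mathcal{S}}(n(m))\leq P_X(2m).$$
Since $m=\Theta\bigl((2R_0n(m))^{1/(1-\beta)}\bigr)$, the hypothesis $\log P_X(k)=o(k^\beta)$ gives $\log\gamma_H^{\mathcal{S}}(n(m))=o\bigl(n(m)^{\beta/(1-\beta)}\bigr)$, so eventually $\gamma_H^{\mathcal{S}}(n(m))\leq\exp\bigl(n(m)^{\beta/(1-\beta)}\bigr)$. The infinitude of the good sequence of $m$'s then produces infinitely many values of $n$ for which the displayed bound holds, proving the \emph{moreover} statement.

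Finally, for the subexponential growth claim, recall that $\gamma_H^{\mathcal{S}}$ is submultiplicative, so by Fekete's lemma the limit $\lim_{n\to\infty}\log\gamma_H^{\mathcal{S}}(n)/n$ exists and equals $\inf_n\log\gamma_H^{\mathcal{S}}(n)/n$. Since the subsequence bound gives $\log\gamma_H^{\mathcal{S}}(n(m))/n(m)\to 0$ (using $\beta/(1-\beta)<1$, which is where $\beta<1/2$ enters), the infimum is zero and hence so is the limit, giving subexponential growth. The main obstacle I anticipate is the simultaneous bookkeeping of three scales—the subshift word length $m$, the automorphism range $R=\Theta(m^{1-\beta})$, and the group word length $n=\Theta(m^{1-\beta})$—which must be matched up carefully to convert the hypothesis $\log P_X(k)=o(k^\beta)$ into a subexponential bound in $n$; the restriction $\beta<1/2$ is precisely what makes this conversion successful.
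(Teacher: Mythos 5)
Your proposal is correct and follows essentially the same route as the paper: the torsion-free hypothesis combined with the finiteness of $G_{\tilde w}$ (Lemma~\ref{lemma:group}) forces the map $\varphi\mapsto\varphi(\tilde w)$ to be injective on $H\cap\Aut_R(X)$, the uniquely extendable words from Lemma~\ref{lemma:growth} convert this into the bound $\log\gamma_H^{\mathcal S}(n)=o(n^{\beta/(1-\beta)})$ along a subsequence, and submultiplicativity upgrades the $\liminf$ to a limit. Your bookkeeping of the scales $m$, $R$, and $n(m)$ is just a reparametrization of the paper's $n$, $R$, $m_n$, so there is nothing substantively different to flag.
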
 
\begin{proof} 
Let $G$ be a finitely generated, torsion free subgroup of $\Aut(X)$ and let $R\in\N$ be fixed.  We estimate 
the growth rate of $|\Aut_{R\cdot m}(X)\cap G|$ as $m\to\infty$.  
By the second part of Lemma~\ref{lemma:growth}, there are infinitely many integers $n$ for which there exists a word $w_n\in\mathcal{L}_n(X)$ that extends uniquely at least $n^{1-\beta}$ times to both the right and the left.  
For each such $n$, let $m_n\in\N$ be the largest integer for which $R\cdot m_n\leq n^{1-\beta}/2$.  
Then $w$ extends uniquely at least $2R\cdot m_n$ times to both the right and the left, 
and $|w_n|\leq(2R\cdot m_n+2R)^{1/(1-\beta)}$.  
Let $\tilde{w}_n$ be the word obtained by extending $w_n$ exactly $2R\cdot m_n$ times to both the right and the left.  Then, as in first step (Extendable Words) 
of the proof of Theorem~\ref{thm:amenable}, we have that if $\varphi_1,\varphi_2\in\Aut_{R\cdot m_n}(X)\cap G$ and $\varphi_1(\tilde{w})=\varphi_2(\tilde{w})$,  
then $\varphi_1$ and $\varphi_2$ lie in the same coset of $G_{\tilde{w}}$.  But by Lemma~\ref{lemma:group}, $G_{\tilde{w}}$ is finite and since $G$ is torsion free, it follows that $\varphi_1=\varphi_2$.  Hence the size of $\Aut_{R\cdot m_n}(X)\cap G$ is bounded by the number of words of the form $\varphi(\tilde{w}_n)$, where $\varphi\in\Aut_{R\cdot m_n}(X)\cap G$.  So, 
\begin{eqnarray*} 
\log|\Aut_{R\cdot m_n}(X)\cap G|&\leq&\log\left(P_X(|w_n|+R)\right) \\ 
&\leq&\log\left(P_X(\lceil(2R\cdot m_n+2R)^{1/(1-\beta)}\rceil+R)\right) \\ 
&=&o\hspace{-0.02 in}\left(\left(\left\lceil(2R\cdot m_n+2R)^{1/(1-\beta)}\rceil+R)\right\rceil+R\right)^{\beta}\right) \\ 
&=&o\left((m_n)^{\beta/(1-\beta)}\right). 
\end{eqnarray*} 
But $m_n\to\infty$ when $n\to\infty$, and so this condition holds infinitely often.  If $\varphi_1,\dots,\varphi_m\in\Aut(X)$ is a (symmetric) set of generators for $G$ and if $R$ is chosen such that $\varphi_1,\dots,\varphi_m\in\Aut_R(X)$, then for any $k\in\N$ and any $i_1,\dots,i_k\in\{1,\dots,m\}$, we have 
$$ 
\varphi_{i_1}\circ\varphi_{i_2}\circ\cdots\circ\varphi_{i_k}\in\Aut_{R\cdot k}(X). 
$$ 
There are infinitely many $k$ for which there exists $n$ such that $k=m_n$, 
and for all such $k$ we have that log of the number of reduced words of length at most $Rk$ that may be written in $\varphi_1,\dots,\varphi_m$ is also $o\hspace{-0.02 in}\left(k^{\beta/(1-\beta)}\right)$.  If $\gamma\colon\N\to\N$ is the growth function of $G$, then 
$$ 
\liminf_{k\to\infty}\frac{\log\gamma(Rk)}{Rk}=0. 
$$ 
But since $\gamma$ is submultiplicative, the limit converges, and so 
$$ 
\lim_{k\to\infty}\frac{\gamma(Rk)}{Rk}=0 
$$ 
and $G$ has subexponential growth.  The second claim in the theorem follows immediately from the 
estimate of $\gamma(Rm_n)$.  
\end{proof} 

We complete the proof of Theorem~\ref{th:polynomial} with: 
\begin{theorem} 
Suppose $(X,\sigma)$ is a minimal subshift and there exists $d\in\N$ such that 
$$ 
\limsup_{n\to\infty}\frac{P_X(n)}{n^d}=0. 
$$ 
Then every finitely generated and torsion free subgroup of $\Aut(X)$ is virtually nilpotent.  Moreover, the step of the nilpotent subgroup is at most $\left\lfloor\frac{-1+\sqrt{8d-7}}{2}\right\rfloor$. 
\end{theorem}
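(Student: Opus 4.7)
The plan is to rerun the counting argument from Theorem~\ref{thm:subexponential} with the polynomial version of Lemma~\ref{lemma:growth} in place of the stretched exponential version, and then to invoke Gromov's polynomial growth theorem (in the weak polynomial growth form of van den Dries and Wilkie) together with Bass's asymptotic growth formula in order to extract the nilpotent structure and control its step.

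Let $G$ be a finitely generated, torsion-free subgroup of $\Aut(X)$ with a symmetric generating set contained in $\Aut_R(X)$, and let $\gamma$ denote its growth function. By Lemma~\ref{lemma:growth}(i) there is $C>0$ and infinitely many $n\in\N$ for which some $w_n\in\mathcal{L}_n(X)$ extends uniquely at least $Cn$ times on both sides. For each such $n$, set $m_n:=\lfloor Cn/(2R)\rfloor$ and let $\tilde w_n$ be the extension of $w_n$ by exactly $2Rm_n$ symbols on each side. Then $\lvert\tilde w_n\rvert\leq(1+2C)n=O(m_n)$. Exactly as in Step~1 of the proof of Theorem~\ref{thm:amenable}, any two $\varphi_1,\varphi_2\in\Aut_{Rm_n}(X)$ with $\varphi_1(\tilde w_n)=\varphi_2(\tilde w_n)$ differ by an element of the finite group $G_{\tilde w_n}$ from Lemma~\ref{lemma:group}; since $G$ is torsion-free, $G\cap G_{\tilde w_n}=\{e\}$, so elements of $\Aut_{Rm_n}(X)\cap G$ are distinguished by their image of $\tilde w_n$. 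Using Lemma~\ref{lemma:growth2} to see that every element of $G$ that is a product of at most $m_n$ generators lies in $\Aut_{Rm_n}(X)$, we obtain
\begin{equation*}
\gamma(m_n)\;\leq\;\lvert\Aut_{Rm_n}(X)\cap G\rvert\;\leq\;P_X(\lvert\tilde w_n\rvert)\;=\;o(m_n^d),
\end{equation*}
where the last equality uses $\lvert\tilde w_n\rvert=O(m_n)$ together with $P_X(k)=o(k^d)$.

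Since this holds for infinitely many $m_n\to\infty$, $G$ has weak polynomial growth of rate at most $d$. By the van den Dries--Wilkie extension of Gromov's polynomial growth theorem, $G$ is virtually nilpotent. Fix a finite-index nilpotent subgroup $N\leq G$; since $G$ is torsion-free, so is $N$. By Bass's theorem, $\gamma_N(k)\asymp k^D$ for a well-defined integer $D$ which equals the polynomial growth rate of $G$. If $D\geq d$, then $\gamma(k)\geq ck^d$ for all sufficiently large $k$, contradicting $\gamma(m_n)=o(m_n^d)$; hence $D\leq d-1$. To bound the step $s$ of $N$, write the lower central series as $N=N_1\supseteq N_2\supseteq\cdots\supseteq N_s\supsetneq N_{s+1}=\{e\}$. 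A classical result of Mal'cev asserts that each quotient $N_i/N_{i+1}$ of a torsion-free nilpotent group is torsion-free abelian; since it is nontrivial for $1\leq i\leq s$, its torsion-free rank $r_i$ satisfies $r_i\geq 1$. Bass's formula $D=\sum_{i=1}^{s}i\cdot r_i$ then gives $D\geq s(s+1)/2$; combined with $D\leq d-1$, this yields $s(s+1)\leq 2(d-1)$, i.e.\ $s\leq\lfloor(-1+\sqrt{8d-7})/2\rfloor$.

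The main technical content is the polynomial counting in the second paragraph, which is a direct transplant of the Theorem~\ref{thm:subexponential} argument with Lemma~\ref{lemma:growth}(i) replacing (ii). The heavier external inputs --- Gromov's theorem in the weak polynomial growth form and Bass's asymptotic formula for nilpotent growth --- are used as black boxes. The one subtle point is verifying that the counting genuinely delivers $\gamma(m_n)=o(m_n^d)$ rather than merely $O(m_n^d)$, which is what enables the strict improvement from $D\leq d$ to $D\leq d-1$ in the third paragraph and, through Bass's formula, produces the quadratic bound on $s$.
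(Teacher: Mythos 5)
Your proposal is correct and follows essentially the same route as the paper: the same counting of $\Aut_{Rm_n}(X)\cap G$ via images of $\tilde w_n$ using Lemma~\ref{lemma:group} and torsion-freeness, the same $o(m_n^d)$ bound along the subsequence from Lemma~\ref{lemma:growth}(i), and the same appeal to van den Dries--Wilkie and Bass--Guivarc'h. The only difference is that you spell out the derivation of the step bound $s(s+1)\leq 2(d-1)$ from Bass's formula, which the paper leaves as an implicit citation; this is a welcome clarification, not a different argument.
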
 
\begin{proof} 
The proof is almost identical to that of Theorem~\ref{thm:subexponential}.  This time, by the first part of Lemma~\ref{lemma:growth}, there exists $C>0$ such that for infinitely many $n$ there is a word $w_n\in\mathcal{L}_n(X)$ that extends uniquely at least $Cn$ times to the right and the left.  As before, fix $R\in\N$ and for each such $n$ define $m_n=\lfloor Cn/2R\rfloor$.  Then $w_n$ extends uniquely at least $2R\cdot m_n$ times to both the right and the left and $|w_n|\leq 2R\cdot m_n/C+2R$. 
Then proceeding in the same way, if $G$ is a finite subgroup of $\Aut(X)$, then 
\begin{eqnarray*} 
|\Aut_{R\cdot m_n}(X)\cap G|&\leq&P_X(|w_n|+R) \\ 
&\leq&P_X(\lceil2R\cdot m_n/C+2R\rceil) \\ 
&=&o\left(\lceil2R\cdot m_n/C+2R\rceil^d\right)=o((m_n)^d).  
\end{eqnarray*} 

Proceeding as in the proof of Theorem~\ref{thm:subexponential}, we have that 
$$ 
\liminf_{k\to\infty}\frac{\log\gamma(Rk)}{(Rk)^d}=0.  
$$ 
By van den Dries and Wilkie's~\cite{VW} strengthening of Gromov's Theorem~\cite{gromov} that assumes that the complexity bound need only hold for infinitely many values, $G$ is virtually nilpotent and the nilpotent subgroup has polynomial growth rate at most $d-1$.  By the Bass-Guivarc'h Formula~\cite{bass, guivarch}, the step is at most $\left\lfloor\frac{-1+\sqrt{8d-7}}{2}\right\rfloor$. 
\end{proof}

\end{document}